\newcommand{\dl}{\lambda}
\newcommand{\C}{\mathbb{C}}
\newcommand{\N}{\mathbb{N}}
\newcommand{\Z}{\mathbb{Z}}
\newcommand{\R}{\mathbb{R}}
\newcommand{\fol}{\mathcal{F}}
\newcommand{\calO}{{\mathcal{O}}}
\newcommand{\Diff}{{{\rm Diff}\, ({\mathbb C^n}, 0)}}
\newcommand{\diffn}{{{\rm Diff}\, ({\mathbb C^n}, 0)}}
\newcommand{\diffCtwo}{{{\rm Diff}_1 ({\mathbb C}^2, 0)}}
\newcommand{\Diffgentwo}{{{\rm Diff}\, ({\mathbb C}^2, 0)}}
\def\picill#1by#2(#3)#4
\vfill\special{illustration #3 scaled #4}}}
\newtheorem{theorem}{Theorem}
\newtheorem{prop}[theorem]{Proposition}
\newtheorem{lemma}[theorem]{Lemma}
\newtheorem{obs}[theorem]{Remark}
\theoremstyle{definition}
\newtheorem{defi}[theorem]{Definition}
\theoremstyle{remark}
\begin{document}

\title[Integrability and finite orbits]{A note on integrability and finite orbits for subgroups of $\Diff$}

\author{Julio C. Rebelo \, \, \, \& \, \, \, Helena Reis}
\address{}
\thanks{}

\maketitle

\begin{abstract}
In this note we extend to arbitrary dimensions a couple of results due respectively to Mattei-Moussu
and to Camara-Scardua in dimension~$2$.
We also provide examples of singular foliations having a Siegel-type singularity and answering in the negative
the central question left open in the previous work of Camara-Scardua.
\end{abstract}

\noindent \hspace{1.2cm} {\small AMS-Classification (2010): 37F75, \, 32H50, \, 37F99}

\bigskip

\section{Introduction}

This note concerns certain recently investigated aspects of higher dimensional generalizations of Mattei-Moussu's
celebrated topological characterization of integrable holomorphic foliations in dimension~$2$,
cf. \cite{M-M}. If $\fol$ denotes a singular holomorphic foliation defined about the origin of $\C^2$, then the
fundamental issue singled out in \cite{M-M} is the fact that the existence of holomorphic first integrals
for $\fol$ can be read off its topological dynamics. In particular, the existence of these first integrals can
be detected at the level of the topological dynamics of the holonomy pseudogroup of $\fol$. An immediate consequence
of their criterion is that the existence of (non-constant) holomorphic
first integrals for singular foliations as above is a property invariant by topological conjugation.

One basic question concerning higher dimensional generalizations of Mattei-Moussu's result
was to know whether a (local) singular holomorphic foliation on $(\C^n,0)$ topologically conjugate to another
holomorphic foliation possessing $n-1$ independent holomorphic first integrals should possess $n-1$ independent holomorphic
first integrals as well. This type of questions recently began to be investigated in \cite{BM}, \cite{scardua} partly
due to recent progresses in the study of the local dynamics of {\it parabolic}\, diffeomorphisms of $(\C^2,0)$,
see \cite{abate}, \cite{abate2}, \cite{raissy}.
In fact, a high point in Mattei-Moussu's argument is their proof that a pseudogroup of holomorphic
diffeomorphisms of $(\C,0)$ having {\it finite orbits}\, must itself be finite and the main theorem in \cite{BM}
is an extension of this to $(\C^2,0)$ under certain additional conditions. Similarly, in \cite{scardua}, the authors
considered the same question for foliations with Siegel singular points on $(\C^3,0)$ which is justified by the fact that these
Siegel foliations are the fundamental building blocks for the general case. By resorting to a celebrated result
due to M. Abate, Camara and Scardua affirmatively answered the question provided that the associated holonomy
maps have isolated fixed points.

Very recently, a general counterexample was found in \cite{thesis}. More precisely in \cite{thesis}
S.Pinheiro and the second author exhibited two topologically conjugate foliations on $(\C^3, 0)$ such that
one admits two independent holomorphic first integrals but not the other.
It so became clear that extending Mattei-Moussu theorem to higher dimensions is a subtle problem and it prompted
to a deeper analysis of the basic ingredients in the two-dimensional argument namely, the nature of pseudogroups
of $(\C^n,0)$ having finite orbits and the corresponding consequences for foliations having Siegel singular points.
The purpose of this Note is to contribute to the understanding of these questions by proposing an elementary generalization
of Mattei-Moussu result for pseudogroups of $(\C^n,0)$, which also yields a higher dimensional version of
the main theorem in \cite{scardua}, and by answering in the {\it negative}\, the general question
for Siegel singular points left open in \cite{scardua}.

To state our main results, let us work in the context of pseudogroups of $\diffn$
(the reader may check Section~2 for definitions and terminology). Our first result is a simple elaboration
of the corresponding statement in \cite{M-M} that turns out to generalize the corresponding result in
\cite{scardua} since it dispenses with the use of the deep
theorem on the existence of parabolic domains due to M. Abate \cite{abate} and valid only in dimension~$2$.

\vspace{0.2cm}

\noindent {\bf Theorem~A}. {\sl Let $G \subset \diffn$ be a finitely generated pseudogroup on a small neighborhood of the
origin in $\C^n$. Given $g \in G$, let ${\rm Dom}\, (g)$ denote the domain of definition of $g$ as element of the
pseudogroup in question. Suppose that for every $g \in G$ and $p \in {\rm Dom}\, (g)$ satisfying $g(p) =p$, one of
the following holds: either $p$ is an isolated fixed point of $g$ or $g$ coincides with the identity on a neighborhood of
$p$. Then the pseudogroup $G$ has finite orbits on a neighborhood of the origin if and only if $G$ itself is finite.}

\vspace{0.2cm}

\noindent {\bf Remark}. {\rm When $G$ is a subgroup of ${\rm Diff}\, (\C,0)$ the assumption of Theorem~A is automatically
verified so that the statement is reduced to Mattei-Moussu's corresponding result in \cite{M-M}. On the other hand,
it is proved in \cite{M-M} that a subgroup of ${\rm Diff}\, (\C,0)$ is not only finite but also cyclic.
In full generality, the second part of the statement cannot be generalized to higher dimensions since every finite
group embeds into a matrix group of sufficiently high dimension. In Section~2
the reader will find simple examples showing that, in fact, the group need not be cyclic already in
dimension~$2$ and even if the assumption of Theorem~A about ``isolated fixed points'' is satisfied.}

\vspace{0.2cm}

A two-dimensional variant of Theorem~A is proved in \cite{scardua} by resorting to Abate's theorem in \cite{abate}
(cf. Section~3 for a detailed comparison between the statement in \cite{scardua} and Theorem~A).
The authors of \cite{scardua} then go ahead to apply their result to the problem of
``complete integrability'' of differential equations. A similar application holds
in arbitrary dimensions and it will be discussed in Section~3. For the time being, it suffices to consider the situation
discussed in \cite{scardua} namely, $\fol$ is a foliation on $(\C^3,0)$ having a Siegel singular point
at the origin and leaving invariant the three ``coordinate axes''. Roughly speaking, the question addressed by
Camara-Scardua is to decide whether or not $\fol$ admits two independent holomorphic first integrals
(i.e. $\fol$ is ``completely integrable'') provided that the holonomy map associated to a certain invariant axis has finite orbits
(cf. Section~3 for accurate statements). Concerning the formulation of their result in this direction, is is however convenient
to mention an issue already pointed
out by Y. Genzmer in  his review to the article in question. In fact, whereas the methods of \cite{scardua} clearly
require the corresponding holonomy map to have isolated fixed points, the authors have failed to explicitly mention
this condition in the formulation of their main result. This said, the main problem left open by the work
of \cite{scardua} concerns precisely the validity of their result when
no assumption involving isolated fixed points is put forward. In other words, the question is whether or not
a Siegel singularity giving rise to a holonomy map with  finite orbits must be ``completely integrable''.
Though an affirmative answer to the latter question was expected, as pointed out by Genzmer and by Abate in
their reviews to the mentioned article, Theorem~B below shows that this is not the case.

\vspace{0.2cm}

\noindent {\bf Theorem~B}. {\sl Let $\fol$ denote the foliation associated to the vector field
\[
X = x(1 + x^2yz^3) \frac{\partial }{\partial x} + y(1 - x^2yz^3) \frac{\partial }{\partial y} - z \frac{\partial }{\partial z} \, .
\]
The foliation $\fol$ does not possess two independent holomorphic first integrals (though it possesses one
non-constant holomorphic first integral). Besides the holonomy map associated to the axis $\{ x=y=0\}$ has
finite orbits whereas it does not generate a finite subgroup of $\Diffgentwo$.}

\vspace{0.2cm}

In the above example, the reader will note that the restriction of $X$ to the invariant plane $\{ z=0\}$ yields
the radial vector field $x \partial /\partial x + y \partial /\partial y$ which admits the meromorphic first integral
$y/x$. This also answers in the negative a refinement of the initial question for Siegel singular point that had been
speculated by Mattei, namely whether the existence of a meromorphic first integral on the transverse plane plus the assumption
of finite orbits for the holonomy map might force the holonomy map in question to have finite order.

In Section~3, we shall also state and prove Theorem~\ref{whatever} which is deduced from our Theorem~A
and extends the main theorem in \cite{scardua} to arbitrarily high dimensions.

This short paper is organized as follows. Section~2 contains the proof of
Theorem~A along with the relevant definitions. As mentioned, Theorem~A is a simple elaboration of the arguments
in \cite{M-M}. Section~3 contains a small digression on Siegel singular points which enables us to state and
prove Theorem~\ref{whatever} extending to higher dimensions the result in \cite{scardua}. The proof of
Theorem~\ref{whatever}, in turn, amounts to a simple combination of Theorem~A and some useful results due to
P. Elizarov-Il'yashenko and to Reis, \cite{EI}, \cite{helena} connecting the linearization problem of these
singular foliations to the same question for certain holonomy maps. Finally, in Section~4 a
few interesting examples of local dynamics of diffeomorphisms tangent to the identity, along with local foliations realizing
some of them as local holonomy maps, will be provided. By building in these examples, the proof of Theorem~B will
quickly be derived.

\vspace{0.1cm}

\noindent {\bf Acknowledgments}. Most of this work was conducted during a visit of the first author to
IMPA and he would like to thank the CNPq-Brazil for partial financial support.
The second author was partially supported by FCT through CMUP.

\section{Proof of Theorem~A}

In the sequel, $G$ denotes a finitely generated subgroup of $\diffn$, where $\diffn$ stands for the group of germs of
local holomorphic diffeomorphisms of $\C^n$ fixing the origin. Assume that $G$
is generated by the elements $h_1, \ldots, h_k \in \diffn$. A natural way to make sense of the local dynamics of $G$ consists
of choosing representatives for $h_1, \ldots, h_k$ as local diffeomorphisms fixing $0 \in \C$. These representatives are still denoted
by $h_1, \ldots, h_k$ and, once this choice is made, $G$ itself can be identified to the {\it pseudogroup}\,
generated by these local diffeomorphisms on a (sufficiently small) neighborhood of the origin. It is then convenient to begin by
briefly recalling the notion of {\it pseudogroup}. For this, consider a small neighborhood $V$ of the origin where the
local diffeomorphisms $h_1, \ldots, h_k$, along with their inverses $h_1^{-1}, \ldots, h_k^{-1}$, are defined and one-to-one.
The pseudogroup generated by $h_1, \ldots, h_k$ (or rather by $h_1, \ldots , h_k, h_1^{-1}, \ldots, h_k^{-1}$ if there is any risk of confusion)
on $V$ is defined as follows. Every element of $G$ has the form $F = F_s \circ \ldots \circ F_1$ where each $F_i$,
$i \in \{1, \ldots, s\}$, belongs to the set $\{h_i^{\pm 1}, i=1, \ldots, k\}$. The element $F \in G$ should be regarded as an one-to-one holomorphic map
defined on a subset of $V$. Indeed, the domain of definition of $F = F_s \circ \ldots \circ F_1$, as an
element of the pseudogroup, consists of those points $x \in V$ such that for every $1 \leq l < s$ the point $F_l \circ
\ldots \circ F_1(x)$ belongs to $V$. Since the origin is fixed by the diffeomorphisms $h_1, \ldots, h_k$, it follows that
every element $F$ in this pseudogroup possesses a non-empty open domain of definition. This
domain of definition may however be disconnected. Whenever no misunderstanding is possible, the pseudogroup defined above will also
be denoted by $G$ and we are allowed to shift back and forward from $G$ viewed as pseudogroup or as group of germs.

Let us continue with some definitions that will be useful throughout the text. Suppose we are given local holomorphic diffeomorphisms $h_1,
\ldots, h_k, h_1^{-1}, \ldots, h_k^{-1}$ fixing the origin of $\C^n$. Let $V$ be a neighborhood of the origin where all these local diffeomorphisms
are defined and one-to-one. From now on, let $G$ be viewed as the pseudogroup acting on $V$ generated by these local diffeomorphisms.
Given an element $h \in G$, the domain of definition of $h$ (as element of $G$) will be denoted by ${\rm Dom}_V (h)$.

\begin{defi}
The $V_G$-orbit $\calO_V^G (p)$ of a point $p \in V$ is the set of points in $V$ obtained from $p$ by
taking its image through every element of $G$ whose domain of definition (as element of $G$) contains $p$.
In other words,
\[
\calO_V^G (p) = \{q \in V \; \, ; \; \,  q = h(p), \; h \in G \; \; {\rm and} \; \; p \in {\rm Dom}_V (h) \} \, .
\]
Fixed $h \in G$, the $V_h$-orbit of $p$ can be defined as the $V_{\langle h\rangle }$-orbit of $p$, where
$\langle h \rangle$ denotes the subgroup of $\diffn$ generated by $h$.
\end{defi}

We can now define ``pseudogroups with finite orbits". Note that
neighborhoods of the origin in $\C^n$ are always sufficiently small to ensure that
$h_1, \ldots , h_k, h_1^{-1}, \ldots, h_k^{-1}$ are well-defined injective maps on $V$.

\begin{defi}\label{def_finiteorbits}
A pseudogroup $G \subseteq \diffn$ is said to have finite orbits if there exists a sufficiently small open neighborhood $V$ of $0
\in \C^n$ such that the set $\calO_V^G (p)$
is finite for every $p\in V$. Analogously, $h \in G$ is said to have finite orbits if the pseudogroup $\langle h \rangle$
generated by $h$ has finite orbits.
\end{defi}

Fixed $h \in G$, the {\it number of iterations of $p$ by $h$}\, is the cardinality of the set $\{ n \in \Z \; \, ; \; \, p \in {\rm Dom}_V (h^{n}) \}$,
where ${\rm Dom}_V (h^{n})$ stands for the domain of definition of $h^n$ as element of the pseudogroup in question.
The number of iterations of $p$ by $h$ is denoted by $\mu_V^h (p)$ and belongs to $\N \cup \{\infty\}$. The lemma below is attributed
to Lewowicz and can be found in \cite{M-M}.

\begin{lemma}[Lewowicz]\label{lemmalewowicz}
Let $K$ be a compact connected neighborhood of $0 \in \R^n$ and $h$ a homeomorphism from $K$ onto $h(K) \subseteq \R^n$
verifying $h(0) = 0$. Then there exists a point $p$ on the boundary $\partial K$ of $K$ whose number of iterations
in $K$ by $h$ is infinite, i.e. $p$ satisfies $\mu_K^h (p) = \infty$.\qed
\end{lemma}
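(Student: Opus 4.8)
The plan is to recast the statement dynamically and then argue by contradiction. Let $M^{+}$ be the set of $x\in K$ whose entire forward orbit $x,h(x),h^{2}(x),\dots$ lies in $K$, and let $M^{-}$ be the analogous set for backward iteration, using $h^{-1}\colon h(K)\to K$. Unwinding the definition of ${\rm Dom}_{K}(h^{n})$, one sees that $\mu_{K}^{h}(p)=\infty$ holds precisely when $p\in M^{+}\cup M^{-}$; moreover $0\in M^{+}\cap M^{-}$ because $h(0)=0$. Thus the assertion is equivalent to $(M^{+}\cup M^{-})\cap\partial K\neq\emptyset$, and I would suppose the contrary, that is, $M^{+}\cup M^{-}\subseteq\mathrm{int}(K)$, and seek a contradiction.

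The first, soft, ingredient is that the escape sets $E^{+}=K\setminus M^{+}$ and $E^{-}=K\setminus M^{-}$ are open in $K$. Indeed, if the forward orbit of $x$ first leaves $K$ at step $m$, then $h^{m}$ is defined and continuous on the closed set of points surviving $m-1$ forward steps in $K$, and it maps $x$ into the open set $\R^{n}\setminus K$; a point near $x$ then either also survives $m-1$ steps and is thrown out of $K$ at step $m$, or it fails to survive and hence escapes even sooner — either way it lies in $E^{+}$. The same reasoning shows that the first-exit functions $n^{\pm}\colon K\to\N\cup\{\infty\}$ are upper semicontinuous. Under the contradiction hypothesis $n^{+}$ and $n^{-}$ are finite on the compact set $\partial K$, hence bounded there, say by $N$. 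Consequently the compact sets $K_{N}$ (points surviving $N$ forward steps in $K$) and $V_{N}$ (surviving $N$ backward steps) lie in $\mathrm{int}(K)$; each is a neighbourhood of $0$, and a direct check on the iterates shows that $h^{N}$ restricts to a homeomorphism $K_{N}\to V_{N}$ fixing $0$. (One even obtains a strictly shrinking tower $K_{(k+1)N}\subseteq\mathrm{int}(K_{kN})$ with $\bigcap_{k}K_{kN}=M^{+}$, and symmetrically on the $V$-side.)

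Deriving a contradiction from this configuration is the heart of the matter, and the step I expect to be the main obstacle; it is where the topology of $K$ around the fixed point $0$ really enters, everything before it being bookkeeping with domains of iterates. The one-dimensional model is instructive: for $K=[a,b]$ with $a<0<b$ and $h$ increasing, either $h(a)\geq a$ and $h(b)\leq b$, so $K$ is forward invariant and $b\in M^{+}$, or, say, $h(b)>b$, in which case $h^{-1}$ maps $[0,b]$ into itself, so the backward orbit of $b$ never leaves $K$ and $b\in M^{-}$ — in either case a boundary point is trapped. In higher dimension there is no such monotone structure, and the trapping must be extracted by a topological (degree-theoretic) argument for the homeomorphism $h^{N}\colon K_{N}\to V_{N}$ near $0$, carried out on the connected component $C$ of $0$ in $K_{N}$, which is a compact connected neighbourhood of $0$ contained in $\mathrm{int}(K)$ and mapped by $h^{N}$ homeomorphically onto the component of $0$ in $V_{N}$. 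This is in essence the argument of Lewowicz, and the real work lies precisely in converting the statement ``orbits escape $K$'' into an index or separation statement that does not rely on one-dimensional monotonicity.
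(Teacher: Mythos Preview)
The paper does not prove this lemma: it is stated with a terminal \qed and attributed to Lewowicz via \cite{M-M}. There is therefore no argument in the paper to compare your proposal against.

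Your proposal, however, is not a proof either, and you say so yourself. The preliminary reductions are correct and cleanly done --- the equivalence $\mu_K^h(p)=\infty \Longleftrightarrow p\in M^{+}\cup M^{-}$, the openness of the escape sets $E^{\pm}$, the upper semicontinuity of the exit times and their consequent boundedness on $\partial K$ under the contradiction hypothesis, and the resulting homeomorphism $h^{N}\colon K_{N}\to V_{N}$ between compact neighbourhoods of the origin contained in $\mathrm{int}(K)$. But at that point you stop, and what follows is an acknowledgement that ``deriving a contradiction from this configuration is the heart of the matter'' together with a reference to ``in essence the argument of Lewowicz''. That is a citation, not a proof: everything before it is bookkeeping (as you note), and the one genuine obstacle is left untouched.

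Two further remarks. First, your reduction to $h^{N}\colon K_{N}\to V_{N}$ does not obviously simplify matters: the new configuration is of exactly the same type as the original one (a homeomorphism of a compact connected neighbourhood of $0$ fixing $0$), so you have not gained leverage. Second, the argument recorded in \cite{M-M} does not pass through such a reduction or through degree theory. It works directly with the connected components $C_{n}$ of the origin in the sets $\{x\in K: h^{j}(x)\in K,\ 0\le j\le n\}$ and with the analogous backward components, exploiting that a nested intersection of compact connected sets is connected; the dichotomy ``forward-trapped boundary point vs.\ backward-trapped boundary point'' that you illustrate in dimension one is obtained by a connectedness/separation analysis of the frontier of a suitable $C_{n_0}$ inside $K$, rather than by an index computation. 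If you want a self-contained argument, that is the direction to pursue.
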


Fixed an open set $V$,  note that the existence of points in $V$ such that $\mu_K^h (p) = \infty$ does not imply that
$p$ is a point with infinite orbit, i.e. there may exist points $p$ in $V$ such that $\mu_V^h(p)=\infty$ but $\#
\calO_V^{\langle h \rangle }(p)<\infty$, where $\#$ stands for the cardinality of the set in question.
These points are called {\it periodic for $h$ on $V$}. A local diffeomorphism is said to be {\it periodic}\, if there is $k \in \N^{\ast}$
such that $f^k$ coincides with the identity on a neighborhood of the origin. Clearly periodic diffeomorphisms possess finite orbits.
To prove Theorem~A, we first need to show the following.

\begin{prop}\label{propperiodic}
Suppose that $G \subseteq \diffn$ is a group satisfying the condition of isolated fixed points of Theorem~A.
Let $h$ be an element of $G$ and assume that $h$ has only finite orbits.
Then $h$ is periodic.
\end{prop}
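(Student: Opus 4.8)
The plan is to follow the one-dimensional scheme of \cite{M-M}: periodic points are produced by Lewowicz's lemma, and the ``isolated fixed points'' hypothesis is used to dispose of them.

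\emph{Spectrum of the linear part.} I would first check that every eigenvalue of $Dh(0)$ is a root of unity. An eigenvalue off the unit circle forces the existence of a local stable (or unstable) manifold of $h$ at $0$ on which $h$ acts as a contraction (resp.\ expansion), hence of points arbitrarily close to $0$ whose $\langle h\rangle$-orbit remains in any prescribed neighbourhood and is infinite; an eigenvalue on the unit circle that is not a root of unity similarly produces an infinite orbit accumulating on $0$ by the classical normalisation on the corresponding invariant subspace. Either possibility contradicts the finite-orbit hypothesis. This step is elementary and, in particular, does not invoke Abate's theorem \cite{abate}.

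\emph{Periodic points near the origin.} Let $V$ be a neighbourhood of $0$ on which $\langle h\rangle$ has finite orbits, and apply Lemma~\ref{lemmalewowicz} to $h$ on a decreasing sequence of compact connected neighbourhoods $K_j\searrow\{0\}$, $K_j\subseteq V$. For each $j$ one obtains $p_j\in\partial K_j$ with $\mu_{K_j}^h(p_j)=\infty$; the $h$-iterates of $p_j$ lying in $K_j$ form a subset of the finite set $\calO_V^{\langle h\rangle}(p_j)$, so two of them coincide and the injectivity of $h$ yields $h^{k_j}(p_j)=p_j$ for some $k_j\ge 1$. Thus $p_j\neq 0$, $p_j\to 0$, and the whole $h$-orbit of $p_j$ is contained in $K_j$.

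\emph{Applying the hypothesis.} Apply the hypothesis of Theorem~A to $g_j:=h^{k_j}\in G$ at the fixed point $p_j$. If $g_j$ equals the identity on a neighbourhood of $p_j$ then, after arranging (as in \cite{M-M}) that $g_j$ has a representative holomorphic on a connected open set containing both $0$ and $p_j$, the identity principle forces $g_j=\mathrm{id}$ as a germ at $0$, so $h$ is periodic and we are done. Hence I may assume that $p_j$ is an isolated fixed point of $g_j$ for every $j$, and, running the same dichotomy at $0$, that unless $h$ is periodic the origin is an isolated fixed point of every $h^k$, i.e.\ each $\mathrm{Fix}(h^k)$ is discrete near $0$.

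\emph{The contradiction.} It remains to exclude the case in which $h$ is not periodic; then for every $j$ the map $g_j=h^{k_j}$ possesses the two distinct \emph{isolated} fixed points $0$ and $p_j$, with $p_j\to 0$. In dimension one this cannot happen: after replacing $h$ by a suitable power one may assume $h$ tangent to the identity, and such a germ, if not the identity, has $\{0\}$ as its only fixed point near $0$ -- contradicting the existence of the $p_j$'s. In $\C^n$ one needs a substitute for this, and the one I would use is the holomorphic fixed-point index, which at an isolated fixed point of a holomorphic map is a strictly positive integer: here the isolated-fixed-point hypothesis is exactly what guarantees that the fixed-point sets in play are discrete and hence the relevant indices finite, and, with the help of the finite-orbit hypothesis, one sets up a ball over which a Lefschetz-type count is legitimate; the total index then equals $1$, incompatible with the two fixed points $0$ and $p_j$ each of index $\ge 1$. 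I expect this last step to be the main obstacle -- it is the $\C^n$-replacement for the one-dimensional Leau--Fatou mechanism carried out without Abate's parabolic-domain theorem -- while the remaining points (turning the powers $h^{k_j}$ into honest holomorphic maps on connected neighbourhoods of $0$, and locating a region on which the index count applies) are of a bookkeeping nature and are handled as in \cite{M-M}.
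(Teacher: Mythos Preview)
Your scheme diverges from the paper at the crucial last step, and I do not see how to close the gap you yourself flag.

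\medskip

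The index argument is not set up correctly. For a holomorphic $g$ tangent to the identity with an isolated fixed point at $0\in\C^n$, the local holomorphic Lefschetz index at $0$ is a positive integer but it is \emph{not} equal to $1$ in general (already $z\mapsto z+z^2$ in $\C$ has index $2$ at the origin). So ``total index equals $1$ on a suitable ball'' is not available, and nothing prevents $g_j=h^{k_j}$ from having two isolated fixed points $0$ and $p_j$ of positive index inside a small ball. Moreover the periods $k_j$ are not bounded: you are comparing different maps $g_j$, each on a different domain, and there is no single ball on which a Lefschetz count is simultaneously legitimate for all of them. (Your preliminary eigenvalue step is also shaky in the Cremer case, where no ``classical normalisation'' is available, but this is secondary.)

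\medskip

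The paper avoids index theory altogether and replaces your last step by a \emph{pigeonhole on the period}. Working on a fixed closed ball $D_{\rho_0}$, one partitions it into periodic points $\mathbf{P}$, escaping points $\mathbf{F}$, and points with infinite orbit $\mathbf{I}$. Lewowicz's lemma gives $(\mathbf{P}\cup\mathbf{I})\cap\partial D_\rho\neq\emptyset$ for every $\rho\le\rho_0$. The key construction is the nested sequence $C_n$ of connected components (through $0$) of the domain of $h^n$, and the dichotomy on $C=\bigcap_n C_n$. If $C$ is uncountable then $C\cap\mathbf{P}=\bigcup_n P_n$ must have some $P_{n_0}$ infinite; its accumulation point is a non-isolated fixed point of the single map $h^{n_0}$, so the hypothesis forces $h^{n_0}\equiv{\rm id}$ on the connected domain containing $0$, i.e.\ $h$ is periodic. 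If $C$ is countable it reduces to $\{0\}$, and then one manufactures compact sets $K$ with $\partial K$ disjoint from every $A_n$, hence from $\mathbf{P}$; Lewowicz then forces $\partial K\cap\mathbf{I}\neq\emptyset$, and varying $K$ yields uncountably many infinite orbits accumulating on $0$, contradicting the finite-orbit hypothesis. The point is that the isolated-fixed-point hypothesis is applied to a \emph{single} power $h^{n_0}$ on a connected domain, not to an unbounded family $h^{k_j}$.
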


Assuming that Proposition~\ref{propperiodic} holds, Theorem~A can be derived as follows:

\begin{proof}[Proof of Theorem~A]
We want to prove that $G$ is finite (for example at the level of germs). So, let us consider the homomorphism
$\sigma : G \rightarrow GL(n, \C)$ assigning to an element $h \in G$ its derivative $D_0 h$ at the origin.
The image $\sigma (G)$ of $G$ is a finitely generated subgroup of $GL(n, \C)$ all of whose elements have finite order.
According to Schur's theorem concerning the affirmative solution of Burnside problem for linear groups, the group
$\sigma (G)$ must be finite, cf. \cite{burnside}. Therefore, to conclude that
$G$ is itself finite, it suffices to check that $\sigma$ is one-to-one or, equivalently, that its
kernel is reduced to the identity. Hence suppose that $h \in G$ lies in the kernel of $\sigma$, i.e. $D_0 h$
coincides with the identity. To show that $h$ itself coincides with the identity, note that $h$ must be
periodic since it has finite orbits, cf. Proposition~\ref{propperiodic}.
Therefore $h$ is conjugate to its linear part at the origin, i.e. it is conjugate to the identity map. Thus $h$ coincides
with the identity on a neighborhood of the origin of $\C^n$. Theorem~A is proved.
\end{proof}

Before proving Proposition~\ref{propperiodic}, let us make some comments concerning the proof of Theorem~A.
When $n=1$, Leau theorem immediately implies that the above considered homomorphism $\sigma$
is one-to-one so that $G$ will be abelian and, indeed, cyclic. This fact does not carry over higher dimensions since,
as already mentioned, every finite group
can be realized as a matrix group and therefore as a pseudogroup of $\Diff$ having finite orbits.
Yet, in general, groups obtained in this manner
contain non-trivial elements with non-isolated fixed points. Therefore, if we are dealing with
pseudogroups satisfying the conditions of
Theorem~A, the question on whether $G$ is abelian may still be raised. However,
even in this restricted setting the group $G$ need not be
abelian. For example, let $G$ be the subgroup of ${{\rm Diff}\, ({\mathbb C^2}, 0)}$
generated by $h_1(x,y) = (e^{\pi i/3} x, e^{2\pi i/3} y)$ and $h_2(x,y) = (y,x)$. Every element of $G$ has
finite orbits and possesses a single fixed point at the origin but the group $G$ is not abelian.

Let us now prove Proposition~\ref{propperiodic}. As already pointed out, the proof amounts to a careful reading of the
argument supplied in \cite{M-M} for the case $n=1$.

\begin{proof}[Proof of Proposition~\ref{propperiodic}]
Let $h$ be a local diffeomorphims in $\diffn$ whose periodic points are isolated unless the corresponding power of $h$
coincides with the identity on a neighborhood of the mentioned periodic point. Let us assume that $h$ is not periodic. To prove the
statement, we are going to show the existence of an open neighborhood $U$ of $0 \in \C^n$ such that the set of points
$x \in U$ with infinite $U_{\langle h \rangle}$-orbit is uncountable and has the origin as an accumulation point.
It will then result that $h$ cannot have finite orbits, thus proving the proposition.

Let $U$ be an arbitrarily small open neighborhood of $0 \in \C^n$ contained in the domains of definition of $h, \, h^{-1}$. Suppose also
that $h, \, h^{-1}$ are one-to-one on $U$. Consider
$\rho_0 > 0$ such that $D_{\rho_0} \subseteq U$, where $D_{\rho_0}$ stands for the closed ball of radius $\rho_0$ centered at the origin.
Following \cite{M-M}, we define the following sets
\begin{eqnarray*}
{\bf P} & = & \{x \in D_{\rho_0} : \; \mu_{D_{\rho_0}}(x) = \infty , \; \# \calO_{D_{\rho_0}}^{\langle h \rangle }(x) < \infty\} \, , \\
{\bf F} & = & \{x \in D_{\rho_0} : \; \mu_{D_{\rho_0}}(x) < \infty , \; \# \calO_{D_{\rho_0}}^{\langle h \rangle}(x) < \infty\} \, , \\
{\bf I} & = & \{x \in D_{\rho_0} : \; \mu_{D_{\rho_0}}(x) = \infty , \; \# \calO_{D_{\rho_0}}^{\langle h \rangle}(x) = \infty\} \, . \\
\end{eqnarray*}
In other words, ${\bf P}$ is the set of periodic points in $D_{\rho_0}$ for $h$, ${\bf F}$ denotes the set of points leaving $D_{\rho_0}$ after finitely many
iterations and ${\bf I}$ stands for the set of non-periodic points with infinite orbit. Naturally, $D_{\rho_0} = {\bf P} \cup {\bf F}
\cup {\bf I}$ and Lewowicz's lemma implies that
\[
({\bf P} \cup {\bf I}) \cap \partial D_\rho\neq \emptyset \, .
\]
for every $\rho \leq \rho_0$. Thus, at least one between ${\it P}$ and ${\bf I}$ is uncountable.
In what follows, the diffeomorphism $h$ is supposed to be non-periodic. With this assumption, our purpose is to show that
${\bf I}$ must be uncountable and the origin is accumulation point of ${\bf I}$.

For $n \geq 0$, let $A_n$ denote the domain of definition of $h^n$ viewed as an element of the pseudogroup {\it generated on $D_{\rho_0}$}. Clearly
$A_{n+1} \subseteq A_n$. Next, let $C_n$ be the connected (compact) component of $A_n$ containing the origin and pose
\[
C = \bigcap_{n \in \N} C_n \, .
\]
Note that $C$ is the intersection of a decreasing sequence of compact connected sets. Therefore $C$ is non-empty and connected.

\vspace{0.1cm}

\noindent {\it Claim}: Without loss of generality, the set $C$ can be supposed countable.

\begin{proof}[Proof of the Claim]
Suppose that $C$ is uncountable.
The reader is reminded that our aim is to conclude that ${\bf I}$ is uncountable provided that $h$ is not periodic. Therefore
we suppose for a contradiction that ${\bf I}$ is countable.
Since ${\bf I}$ is countable so is ${\bf I} \cap C$.
Consider now $C \cap {\bf P}$ and note that this intersection must be uncountable, since $C \subset {\bf P} \cup {\bf I}$. Let
\[
C \cap {\bf P} = \bigcup_{n\in\N} P_n \, ,
\]
where $P_n$ is the set of points $x \in C \cap {\bf P}$ of period $n$. Note that there exists a certain $n_0 \in \N$ such that
$P_{n_0}$ is infinite, otherwise all of the $P_n$ would be finite and $C \cap {\bf P}$ would be countable. Being infinite,
$P_{n_0}$ has a non-trivial accumulation point $p$ in $C_{n_0}$. The map $h^{n_0}$ is holomorphic on an open neighborhood
of $C_{n_0}$ and it is the identity on $P_{n_0} \cap C_{n_0}$. Since $p$ is not an isolated fixed point of $h^{n_0}$, it
follows that $h^{n_0}$ coincides with the identity map on $C_{n_0}$, i.e. on the connected component of the domain of
definition of $h^{n_0}$ that contains the origin. This contradicts the assumption of non-periodicity of $h$ (modulo reducing the
neighborhood of the origin). Hence ${\bf I}$
is uncountable. Moreover the closure of ${\bf I}$ contains the origin since, otherwise, there is a small disc
$D$ about the origin such that $D \cap {\bf I} =\emptyset$. If this is the case, it suffices
to repeat the above procedure with $C \cap D$ to obtain a contradiction.
\end{proof}

In view of the preceding, in the sequel $C$ will be supposed to consist of countably many points. The purpose is still to conclude
that the set ${\bf I}$ is uncountable (and the origin belongs to its closure).
Since, in addition, $C$ is connected, it must be reduced to the origin itself. Then,
for every $\rho < \rho_0$, we have $C \cap \partial D_\rho = \emptyset$. Now note that, for a fixed $\rho > 0$, the sets
\[
C_1 \cap \partial D_\rho, \, \,  (C_1 \cap C_2) \cap \partial D_\rho, \, \,  (C_1 \cap C_2 \cap C_3)\cap \partial D_\rho, \, \,  \ldots
\]
form a decreasing sequence of compact sets. Hence the intersection $\bigcap_{n\in\N} C_n \cap \partial D_\rho$ is non-empty,
unless there exists $n_0 \in \N$, such that $C_{n_0} \cap \partial D_\rho = \emptyset$. The latter case must occur since
$C \cap \partial D_\rho = \emptyset$. However, the value of $n_0$
for which the mentioned intersection becomes empty may depend on $\rho$.

Fix $\rho > 0$ and let $n_0$ be as above. Let $K$ be a compact connected neighborhood of $C_{n_0}$ that does not intersect
the other connected components of $A_{n_0}$, if they exist. The set $K$ can be chosen so that $\partial K \cap A_{n_0} = \emptyset$.
The inclusion $A_{n+1} \subseteq A_n$ guarantees that $\partial K$ does not intersect $A_n$, for every $n \geq n_0$.
Therefore
\[
\partial K \cap {\bf P} = \emptyset \, .
\]
In fact, if there were a periodic point $x$ of $D_{\rho_0}$ on $\partial K$, then $x$ would belong to every set $A_n$. In
particular, it would belong to $A_{n_0}$, hence leading to a contradiction. Nonetheless, Lewowicz's lemma guarantees the existence
of a point $x$ on the boundary $\partial K$ of $K$ such that the number of iterations in $K$ is infinite, i.e. such that
$\mu_K (x) = \infty$. Since $K \subseteq D_{\rho} \subseteq D_{\rho_0}$, it follows that
\[
\partial K \cap {\bf I} \ne \emptyset \, .
\]
By construction, it is clear that a compact set $K$ satisfying the above conditions is not unique. Indeed, for $K$ as before, denote
by $K_{\varepsilon}$ the compact connected neighborhood of $K$ whose boundary has distance to
$\partial K$ equal to $\varepsilon$. Then, there exists $\varepsilon_0 > 0$ such that $K_{\varepsilon}$ satisfies the
same properties as $K$ for every $0 \leq \varepsilon \leq \varepsilon_0$ with respect to $A_{n_0}$. In particular,
\[
\partial K_{\varepsilon} \cap {\bf I} \ne \emptyset
\]
for all $0 \leq \varepsilon \leq \varepsilon_0$. Therefore ${\bf I}$ must be uncountable. Finally, it remains to prove
that $0 \in \C^n$ is an accumulation point of ${\bf I}$. This is, however, a simple consequence of the fact that a compact set $K
\subseteq D_{\rho}$ as above can be considered for all $\rho > 0$. This completes the proof of Proposition~\ref{propperiodic}.
\end{proof}

\section{Siegel singular points and an extension of a result by Camara-Scardua}

We can now move on to state and prove Theorem~\ref{whatever}. The proof of this theorem follows from
the combination of our Theorem~A with the results in \cite{EI} or in \cite{helena}.

To begin with, let $\fol$ be a singular foliation on $(\C^n, 0)$ and let $X$ be a representative
of $\fol$, i.e. $X$ is a holomorphic vector field tangent to $\fol$ and whose singular set has codimension at
least~$2$. Suppose that the origin is a singular point of $\fol$ and denote by $\dl_1, \ldots, \dl_n$ the corresponding eigenvalues of $DX$ at the origin.
Assume the following holds:
\begin{enumerate}
\item $\fol$ has an isolated singularity the origin.
\item The singularity of $\fol$ is of Siegel type.
\item The eigenvalues $\dl_1, \ldots, \dl_n$ are all different from zero and
there exists a straight line through the origin, in the complex plane, separating $\dl_1$ from the remainder eigenvalues.
\item Up to a change of coordinates, $X = \sum_{i=1}^n \dl_ix_i(1+f_i(x)) \partial /\partial x_i$, where $x=(x_1,\ldots,x_n)$ and $f_i(0)=0$ for all $i$
\end{enumerate}
Note that condition~(4) is equivalent to the existence of $n$~invariant hyperplanes
through the origin. This condition, as well as condition~(3), is always verified when $n=3$ provided that the singular point is of
{\it strict Siegel type}\, cf. \cite{C}. Here the reader is reminded that, in dimension~$3$, the singular point is said to be
of strict Siegel type if $0 \in \C$ is contained in the interior of the convex hull of $\{\dl_1,\dl_2 , \dl_3\}$.

In general, the {\it eigenvalues of a foliation $\fol$}\, are nothing but the eigenvalues of the differential of a representative
vector field $X$ for $\fol$. These eigenvalues are therefore defined only up to a multiplicative constant and this definition
allows us to avoid passing through the choice of a representative vector field when dealing with foliations belonging to the
Siegel domain.

In any event, when it comes to problems of ``complete integrability'', the relevant Siegel singular points are not
of strict Siegel type. Indeed, in the case of strict Siegel singular point, the corresponding holonomy maps
have a ``hyperbolic part'' i.e. they are partially hyperbolic and therefore cannot have finite orbits. Similarly the
corresponding foliation cannot be ``completely integrable''. In these cases, where conditions~(3) and~(4) may fail
to hold, the essentially are always satisfied in the cases of interest (at least in dimension~$3$). This is due
to the basic properties of the standard reduction procedure by means of blow-up maps
for singularities in dimension~$3$ (which is known to the expert to hold in dimension~$3$ provided that natural topological
conditions are satisfied). However, to avoid making the discussion needlessly long, we
shall proceed as in \cite{scardua} and assume once and for all that, up to a rotation, there is an eigenvalue
$\lambda_1$ lying in $\R_+$ whereas the remaining eigenvalues $\lambda_2, \ldots ,\lambda_n$ lie in $\R_-$.
We can now state Theorem~\ref{whatever}.

\begin{theorem}
\label{whatever}
Let $\fol$ denote a singular foliation on $(\C^n,0)$ whose eigenvalues $\lambda_1, \ldots , \lambda_n$ are all
different from zero. Suppose also that $\lambda_1 \in \R_+$ while $\lambda_2, \ldots ,\lambda_n$ are all negative reals.
Denote by $h_1$ the local holonomy map associated to the axis $x_1$ (corresponding to the eigenvalue $\lambda_1$)
and suppose that $h_1$ has isolated fixed points (in the sense of Theorem~A) and that it has finite orbits.
Then $\fol$ admits $n-1$ independent holomorphic first integrals.
\end{theorem}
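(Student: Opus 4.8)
The plan is to reduce the statement about foliations to the statement about holonomy maps, for which Theorem~A applies directly. First I would invoke the structure theory of Siegel singular points under the standing hypotheses: since $\lambda_1 \in \R_+$ and $\lambda_2,\ldots,\lambda_n \in \R_-$, the line separating $\lambda_1$ from the rest shows we are in the Siegel domain, the singularity is isolated, and (by the results quoted from \cite{C}) there are $n$ invariant coordinate hyperplanes, so a representative vector field has the normal form $X = \sum_i \lambda_i x_i(1+f_i(x))\,\partial/\partial x_i$ with $f_i(0)=0$. In particular $\fol$ restricted to the $x_1$-axis is nonsingular away from $0$, and one can speak of the local holonomy $h_1 \in \mathrm{Diff}(\C^{n-1},0)$ of a loop around the origin in the leaf $\{x_2=\cdots=x_n=0\}\setminus\{0\}$ (or, more precisely, of the holonomy pseudogroup of $\fol$ along that axis acting on a transverse $(n-1)$-disk).

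The key step is to apply Theorem~A to $h_1$: by hypothesis $h_1$ has finite orbits and its fixed points are isolated in the sense of Theorem~A (every fixed point of every element of $\langle h_1\rangle$ is either isolated or the element is locally the identity). Since $\langle h_1\rangle$ is a finitely generated --- indeed cyclic --- pseudogroup on a small neighborhood of $0\in\C^{n-1}$, Theorem~A yields that $\langle h_1\rangle$ is \emph{finite}, i.e. $h_1$ is a periodic diffeomorphism: $h_1^{N}=\mathrm{id}$ near the origin for some $N\in\N^{*}$. A finite cyclic group of germs of diffeomorphisms is linearizable (average a metric, or average the obvious candidate conjugacy against the $\Z/N$-action), so after a holomorphic change of the transverse coordinates $h_1$ becomes a diagonal linear map of finite order; in particular $h_1$ is holomorphically linearizable and its linear part $D_0 h_1$ has all eigenvalues roots of unity.

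Next I would feed this linearizability of $h_1$ into the Elizarov--Il'yashenko / Reis dictionary \cite{EI,helena}, which relates the analytic classification (and in particular the linearizability/normal-form status) of a Siegel foliation of the above type to that of its holonomy maps along the invariant axes. Concretely: since $\fol$ has $n$ invariant hyperplanes and the holonomy $h_1$ along the distinguished axis is holomorphically linearizable (finite order), these results give that $\fol$ itself is holomorphically linearizable, i.e. analytically conjugate to the linear foliation generated by $\sum_i \lambda_i x_i\,\partial/\partial x_i$. [If the cited results only directly control a single holonomy, one should note that in the normal form the other holonomies are conjugate to powers/relatives of $h_1$ --- or are themselves finite by the same Theorem~A argument applied to loops around the other axes --- so no obstruction to linearization survives; this is the place where one must be a little careful.] Once $\fol$ is linear, with eigenvalues $\lambda_1>0>\lambda_2,\ldots,\lambda_n$, the $n-1$ monomials $u_j = x_1^{-\lambda_j} x_j^{\lambda_1}$ (suitably interpreted: since the $\lambda_i$ are real of opposite signs one uses rational approximations / integer combinations so that genuine holomorphic monomials $x_1^{a_j}x_j^{b_j}$ with $a_j,b_j\in\N$ and $a_j\lambda_1+b_j\lambda_j=0$ appear — possible because $\lambda_j/\lambda_1<0$) are holomorphic first integrals, and one checks their differentials are independent on a dense open set. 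Pulling back by the linearizing conjugacy gives $n-1$ independent holomorphic first integrals for the original $\fol$.

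The main obstacle I anticipate is \emph{not} the finiteness of $h_1$ (that is exactly Theorem~A) but the passage from ``$h_1$ linearizable'' to ``$\fol$ completely integrable'': one must quote the correct statement from \cite{EI} or \cite{helena} in the right generality, and confirm that controlling the single holonomy $h_1$ (plus the existence of the $n$ invariant hyperplanes from condition~(4)) suffices to linearize $\fol$, rather than needing simultaneous control of all the axis-holonomies. A secondary, more bookkeeping-level point is exhibiting the $n-1$ independent first integrals as genuine holomorphic functions when the $\lambda_i$ are merely real (not integers): this requires choosing the exponent vectors in $\N^n$ solving $\sum a_i \lambda_i = 0$ with the correct support, which is possible precisely because $\lambda_1$ and each $\lambda_j$ ($j\ge 2$) have opposite signs.
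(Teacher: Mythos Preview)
Your approach matches the paper's proof almost exactly: apply Theorem~A to conclude that $h_1$ is periodic, hence linearizable with linear part $T=D_0h_1$; observe that $T$ is precisely the holonomy of the linear foliation $\fol_Z$ associated with $Z=\sum_i \lambda_i x_i\,\partial/\partial x_i$; invoke Theorem~\ref{TMMhigher} (the Elizarov--Il'yashenko/Reis result) with the given representative $X$ and with $Y=Z$ to conclude that $\fol$ is analytically equivalent to $\fol_Z$; and finally note that complete integrability of $\fol_Z$ is equivalent to periodicity of $T$. Your worry about possibly needing to control all axis-holonomies is unwarranted: Theorem~\ref{TMMhigher} as stated concerns only the single holonomy along the distinguished separatrix, and that is exactly how the paper uses it.

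There is, however, one genuine slip in your final ``bookkeeping'' step. You claim that the monomial first integrals $x_1^{a_j}x_j^{b_j}$ with $a_j\lambda_1+b_j\lambda_j=0$ and $a_j,b_j\in\N$ exist ``precisely because $\lambda_1$ and each $\lambda_j$ have opposite signs.'' Opposite sign alone is not enough: if $\lambda_1=1$ and $\lambda_2=-\sqrt 2$, no such pair of positive integers exists. The correct reason---which you already have in hand but do not connect---is that the eigenvalues $e^{2\pi i\lambda_j/\lambda_1}$ of $T$ are roots of unity (since $T^N={\rm id}$), forcing each ratio $\lambda_j/\lambda_1$ to lie in $\Q_-$. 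Writing $\lambda_j/\lambda_1=-p_j/q_j$ with $p_j,q_j\in\N^{\ast}$ then yields the holomorphic first integrals $x_1^{p_j}x_j^{q_j}$. The paper sidesteps this by simply asserting that complete integrability of the linear diagonal foliation $\fol_Z$ is equivalent to periodicity of its holonomy $T$, a statement which encodes exactly this rationality condition.
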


To prove Theorem~\ref{whatever}, Theorem~\ref{TMMhigher} below will be needed. The latter theorem generalizes
to higher dimensions a unpublished result of Mattei which, in turn, improved on an earlier version appearing in \cite{M-M}.

\begin{theorem}\label{TMMhigher}
{\rm ({\bf [EI],  [Re]})}
Let $X$ and $Y$ be two vector fields satisfying conditions (1), (2), (3) and~(4) above.
Denote by $h^X$ (resp. $h^Y$) the holonomy of $X$ (resp. $Y$) relatively to
the separatrix of $X$ (resp. $Y$) tangent to the eigenspace associated to the
first eigenvalue. Then $h^X$ and $h^Y$ are analytically conjugate if and only
if the foliations associated to $X$ and $Y$ are analytically equivalent.
\end{theorem}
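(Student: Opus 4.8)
The statement is an equivalence, and its two implications are of very different natures; the plan is to dispose of the forward direction by functoriality of holonomy and to concentrate the real work on the converse. \textbf{From equivalence to conjugacy.} Suppose first that the foliations associated to $X$ and $Y$ are analytically equivalent through a germ of biholomorphism $H$ of $(\C^n,0)$ carrying the leaves of one to the leaves of the other. Since the eigenvalue ratios at an isolated singular point are analytic invariants of a foliation, condition~(3) singles out \emph{intrinsically} the separatrix tangent to the eigendirection of $\lambda_1$; hence $H$ must carry this distinguished separatrix of $X$ to its counterpart for $Y$. Transporting a transversal through $H$ and comparing loops in the separatrix punctured at the origin, one checks directly that $H$ conjugates the holonomy representation of $X$ to that of $Y$, so in particular $h^X$ and $h^Y$ are analytically conjugate. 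This is the routine half.

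\textbf{From conjugacy to equivalence.} For the substantive direction I would first record, using conditions~(1)--(4), that the coordinate hyperplanes $\{x_i=0\}$ are all $\fol$-invariant and that the distinguished separatrix $S=\{x_2=\cdots=x_n=0\}$ is the $x_1$-axis. Fix a base point $p\in S\setminus\{0\}$ and a small transversal $\Sigma\cong(\C^{n-1},0)$; the holonomy $h^X$ is the return map along a generator of $\pi_1(S\setminus\{0\})\cong\Z$, and the germ of $\fol_X$ along $S\setminus\{0\}$ is analytically equivalent to the suspension of the representation $\Z\to{\rm Diff}\,(\C^{n-1},0)$ sending the generator to $h^X$. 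Because eigenvalue ratios are analytic invariants of $\fol$, the statement is to be read within a fixed formal model, so I would begin by normalizing $X$ and $Y$ so that their linear parts coincide, the analytic conjugacy of $h^X$ and $h^Y$ being consistent with this normalization.

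Given the analytic conjugacy $\Phi$ with $\Phi\circ h^X=h^Y\circ\Phi$, the next step is to build a biholomorphism $H_0$ between neighborhoods of $S_X\setminus\{0\}$ and $S_Y\setminus\{0\}$ by lifting $\Phi$ along the leaves: flow-box charts allow one to transport $\Phi$ leafwise, and the intertwining relation guarantees that the lift is single-valued after traversing the generating loop, the two ambient monodromies being exactly $h^X$ and $h^Y$. By construction $H_0$ is holomorphic and leaf-preserving, maps $S_X\setminus\{0\}$ onto $S_Y\setminus\{0\}$, and, since the invariant hyperplanes are saturated by leaves, respects the whole invariant divisor. Under the separating-line hypothesis~(3) the distinguished separatrix is dominant, every leaf in a punctured neighborhood of $0$ accumulates on $S$, and the saturation of a neighborhood of $S\setminus\{0\}$ therefore fills an entire punctured neighborhood of the origin; so $H_0$ is in fact defined and holomorphic off the origin and the thin invariant divisor.

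The decisive and hardest step is the extension of $H_0$ across this analytic set. Once $H_0$ is shown to remain bounded near the divisor, Riemann's removable-singularity theorem extends it holomorphically across the smooth codimension-one strata, after which the only remaining locus is the origin, of codimension $n\ge 2$, where Hartogs' theorem supplies the extension for free; applying the same reasoning to $H_0^{-1}$ shows the extension $H$ is biholomorphic, and by continuity it conjugates $\fol_X$ to $\fol_Y$. The main obstacle is precisely the boundedness near the divisor: in the Siegel domain the leaves spiral towards $S$ and the transverse multipliers $\exp(2\pi i\lambda_j/\lambda_1)$ sit on the unit circle, so the approach of leaves to $S$ is delicate and small-divisor phenomena threaten the required control. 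Taming this, by precise estimates on how the holonomy governs the accumulation of leaves on $S$ under hypothesis~(3), is exactly the technical content carried out by Elizarov--Il'yashenko \cite{EI} and by Reis \cite{helena}, and I would invoke their estimates to close the argument rather than reprove them here.
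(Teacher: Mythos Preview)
The paper does not actually prove this theorem: it is stated with attribution to \cite{EI} and \cite{helena}, and the only ``proof'' provided is the sentence ``The proof of Theorem~\ref{TMMhigher} can be found in either \cite{EI} or \cite{helena}, a particularly detailed exposition appears in \cite{monograph}.'' So there is no argument in the paper to compare your proposal against; the result is used purely as a black box in the proof of Theorem~\ref{whatever}.

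That said, your sketch is a faithful roadmap of the standard argument as it appears in those references: the trivial direction by functoriality of holonomy; then, for the converse, lifting the transverse conjugacy $\Phi$ leafwise via flow boxes to obtain a leaf-preserving biholomorphism near $S\setminus\{0\}$, with single-valuedness guaranteed precisely by the intertwining relation $\Phi\circ h^X=h^Y\circ\Phi$; then extension across the invariant divisor by Riemann and across the origin by Hartogs. You correctly single out condition~(3) as the hypothesis ensuring that every nearby leaf accumulates on $S$ (so that the lift is defined on a full punctured neighborhood), and you correctly locate the genuine difficulty in the boundedness estimate needed for the Riemann extension in the Siegel situation, which is exactly the technical content of \cite{EI} and \cite{helena}. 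Since you explicitly defer to those references for that step, your proposal is not so much an independent proof as an accurate outline that ends by citing the same sources the paper does; given the paper's own treatment, that is entirely appropriate here.
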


The proof of Theorem~\ref{TMMhigher} can be found in either \cite{EI} or \cite{helena}, a particularly detailed exposition
appears in \cite{monograph}.
With this theorem in hand, the proof of Theorem~\ref{whatever} goes as follows.

\begin{proof}[Proof of Theorem~\ref{whatever}]
Consider a foliation $\fol$ as in the statement of Theorem~\ref{whatever}. The local holonomy map $h_1$ is defined
on a suitable local section and it can also be identified to a local diffeomorphism fixing the origin
of $\C^{n-1}$. By assumption, all iterates of $h$ have isolated fixed points. Therefore Theorem~A implies
that the local orbits of $h$ are finite if and only if $h$ is periodic. Naturally, we may
assume this to be the case. Let then $N$ be the {\it period}\, of $h$, namely the smallest strictly positive
integer for which $h^N$ coincides with the identity on a neighborhood of the origin of $\C^{n-1}$
(with the above mentioned identifications). Denote also by $T$ the derivative of $h$ at the origin, which
is itself identified to a linear transformation of $\C^{n-1}$. The fact that $h$ is periodic of
period~$N$ ensures that $T$ is also periodic with the same
period~$N$. In fact, $h$ and $T$ are analytically conjugate as already mentioned (i.e. $h$ is linearizable).
Next denote by $\fol_Z$ the foliation associated to the linear vector field
$$
Z = \sum_{i=1}^n \dl_ix_i \partial /\partial x_i \, .
$$
It is immediate to check that the map $T$ coincides with the holonomy map induced by $\fol_Z$ with respect to
the axis $x_1$. Therefore Theorem~\ref{TMMhigher} implies that the foliations $\fol$ and $\fol_Z$ are analytically
equivalent. However, since $\fol_Z$ is induced by a linear (diagonal) vector field, it becomes clear that the
complete integrability of $\fol_Z$ is equivalent to the periodic
character of the holonomy map $T$. Since $\fol$ and $\fol_Z$ are analytically equivalent, we conclude from what precedes that the condition
of having a local holonomy $h$ with finite orbits forces $\fol$ to be completely integrable. The converse is clear, since having $\fol$
completely integrable ensures at once that the holonomy map $h$ must be periodic. Theorem~\ref{whatever} is proved.
\end{proof}

\begin{obs}
{\rm Concerning the argument given in \cite{scardua} for the analogous statement in dimension~$3$, the authors
have relied in Abate's theorem to conclude that the corresponding holonomy map must be of finite order. Abate's theorem
however is no longer available once the dimension is~$4$ or larger.}
\end{obs}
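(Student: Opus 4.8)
The remark makes two assertions, and to substantiate it I would address each in turn while keeping in mind that no computation is involved: the content is a structural comparison between the method of \cite{scardua} and the one developed here. The plan for the first assertion is to trace the dimension-$3$ argument of \cite{scardua} to the exact point where Abate's theorem \cite{abate} is invoked. A Siegel singularity on $(\C^3,0)$ of the relevant type has, along its distinguished invariant axis, a local holonomy map that is a germ in $\Diffgentwo$. After replacing this germ by a suitable power one is reduced to a map tangent to the identity, and the crux of \cite{scardua} is precisely the passage from ``finite orbits'' to ``finite order'' for such a map. I would emphasize that this passage in \cite{scardua} is carried out by means of the existence of parabolic curves (the Leau--Fatou flower analogue in two variables) guaranteed by Abate's theorem, and that the isolated-fixed-point hypothesis is exactly what makes that theorem applicable. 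This locates unambiguously the single step on which the whole two-dimensional argument hinges.

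For the second assertion I would recall the precise scope of Abate's theorem: it concerns germs of biholomorphisms of $(\C^2,0)$ tangent to the identity and possessing an isolated fixed point, for which it produces parabolic curves attracted to the origin. Now a Siegel singularity on $(\C^n,0)$ has, relative to the distinguished axis, a holonomy map that is a germ of $(\C^{n-1},0)$. For $n=3$ this is a germ in $\Diffgentwo$ and Abate's theorem applies; but for $n \geq 4$ the holonomy lives in $\mathrm{Diff}(\C^{n-1},0)$ with $n-1 \geq 3$, and the corresponding statement about the existence of parabolic domains for tangent-to-the-identity germs in three or more variables is simply not provided by \cite{abate}. I would stress that this is not a matter of transcribing the proof to higher dimension: the existence of parabolic curves in dimension $\geq 3$ is a genuinely harder and, in the generality required here, unavailable phenomenon. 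Hence the strategy of \cite{scardua} cannot be transplanted verbatim beyond $n=3$.

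The point of the remark is then that the present approach bypasses this obstruction entirely, and I would close by making that explicit. The proof of Theorem~A rests only on Lewowicz's lemma (Lemma~\ref{lemmalewowicz}), which is a purely topological fixed-point statement valid in every dimension, together with the isolated-fixed-point hypothesis and Schur's solution of the Burnside problem; no existence result for parabolic domains ever intervenes. Consequently Theorem~A recovers the ``finite orbits implies finite order'' conclusion in all dimensions, and through Theorem~\ref{whatever} it extends the result of \cite{scardua} to arbitrary $n$. The main subtlety in substantiating the remark is therefore not analytic but expository: one must pin down precisely where Abate's theorem enters the argument of \cite{scardua} and verify that no dimension-$2$-specific substitute for it is known when the dimension is $4$ or larger, rather than carry out any estimate.
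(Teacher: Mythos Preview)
The statement is a \emph{Remark}, and the paper offers no proof for it; it is stated as a brief expository comment immediately after the proof of Theorem~\ref{whatever}. Your proposal is therefore not to be compared against any argument in the paper but is rather an (accurate) expansion of the remark's content: you correctly identify where Abate's theorem enters the argument of \cite{scardua}, why its scope is limited to germs in ${\rm Diff}(\C^2,0)$, and how Theorem~A circumvents this via Lewowicz's lemma and Schur's theorem.
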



\section{Examples of local dynamics and proof of Theorem~B}

This section is split in two paragraphes. First we shall describe the local dynamics of certain
local diffeomorphisms fixing the origin of $\C^2$ that happen to be tangent to the identity. These
examples contains family of diffeomorphisms having finite orbits whereas the diffeomorphism
itself has infinite order.
In the second paragraph we are going to prove Theorem~B by showing that, in fact, the holonomy
map associated to the axis $\{ x=y=0\}$ in the example described in Theorem~B falls in one of the
previously discussed classes of diffeomorphisms.

\subsection{Local diffeomorphisms}

Recall that $\diffCtwo$ denotes the normal subgroup of $\Diffgentwo$ consisting of diffeomorphisms tangent to the identity.
The simplest example of an element of $\diffCtwo$ possessing finite orbits is obtained by setting
$F(x,y) = (x + f(y) , y)$, with $f(0) =f'(0) =0$. This case, however, can be set aside in what follows since
the foliation associated to the infinitesimal generator of $F$ is regular. In particular,
it cannot be realized as the holonomy map of a Siegel singular point. Some genuinely more interesting examples
are listed below.

\vspace{0.1cm}

\noindent {\bf Example 1}: Linear vector fields.

Consider the vector field $Y$ given by $Y = x \partial /\partial x -  \lambda y \partial /\partial y$
where $\lambda =n/m$ with $m,n \in \N^{\ast}$. The foliation associated to $Y$ will be denoted by $\fol$ and it should be noted
that the holomorphic function $(x,y) \mapsto x^n y^m$ is a first integral for $\fol$. Let $\phi_Y$ denote the time-one map
induced by $Y$. The local dynamics of $\phi_Y$ can easily be described as follows. The vector field $Y$ can be projected on the axis
$\{ y=0\}$ as the vector field $x \partial /\partial x$. Therefore the (real) integral curves of $Y$ coincide
with the lifts in the corresponding leaves of $\fol$ of the (real) trajectories of $x \partial /\partial x$ on $\{ y=0\}$.
The latter trajectories are radial lines being emanated from $0 \in \{ y=0\} \simeq \C$ so that
the local dynamics of $\phi_Y$ restricted to $\{ y=0\}$ is such that, whenever $x_0 \neq 0$, the sequence $\{\phi_Y^n (x_0) \}$ marches off a
uniform neighborhood of $0 \in  \{ y=0\} \simeq \C$ as $n \rightarrow \infty$ and it converges to $0 \in  \{ y=0\} \simeq \C$
as $n \rightarrow -\infty$. Consider now the orbit of
a point $(x_0, y_0)$, $x_0y_0 \neq 0$, by $\phi_Y$. Since this is simply the lift in the leaf of $\fol$ through $(x_0, y_0)$ of the dynamics
of $x_0 \in \{ y=0\} \simeq \C$, it follows that $\phi_Y^n (x_0, y_0)$ leaves a fixed
neighborhood of $(0,0) \in \C^2$ since the first coordinate increases to uniformly large values provided that $n \rightarrow \infty$.
Similarly, when $n \rightarrow -\infty$, the first coordinate of $\phi_Y^n (x_0, y_0)$ must converge to
{\it zero}\, so that the second coordinate becomes ``large'' due to the first integral $x^ny^m$. Thus, fixed a (small) neighborhood
$U$ of $(0,0) \in \C^2$, every orbit of $\phi_Y$ that is not contained in $\{ x=0\} \cup \{ y=0\}$ is bound to intersect $U$ at finitely many points.

Clearly the time-one map induced by $Y$ is not tangent to the identity. However, examples of time-one maps
tangent to the identity and satisfying the desired conditions can be obtained, for example, by considering
the vector field $X = x^ny^m Y$ and taking the time-one map $\phi_X$ induced by $X$.
Clearly the linear part of $X$ at $(0,0)$ equals zero so that $\phi_X$ must be tangent to the identity. Furthermore,
the multiplicative factor $x^ny^m$ annihilates the dynamics of $\phi_X$ over the coordinate axes so that only
the orbits of points $(x_0, y_0)$ with $x_0y_0 \neq 0$ have to be considered.
The leaf of $\fol$ through $(x_0, y_0)$ will be denoted by $L$ and $c \in \C$ will stand for the value of $x^ny^m$ on $L$.
The restriction of $X$ to $L$ is nothing but the restriction of $Y$ to $L$ multiplied by the scalar
$c \in \C$. Therefore the real orbits of $X$ in $L$ coincide with the lift to $L$ of the real orbits of the vector field
$cx  \partial /\partial x$ defined on $\{ y=0\}$.  The geometric nature of the orbits of $cx \partial /\partial x$
depends on the argument of $c \in \C$, i.e. setting $c= \vert c \vert
e^{2\pi i \alpha}$, this geometry depends on $\alpha \in [0, 2\pi)$. If $\alpha = \pi/2$, then the orbits of
$cx \partial /\partial x$ are contained in circles about the origin.
After finitely many tours, these circles lift into the corresponding leaf (i.e. the leaf on which $xy$ equals~$c$)
as closed paths invariant by $\phi_X$. In addition for a ``generic'' choice of $c$ satisfying $\alpha = \pi/2$,
the resulting time-one map restricted to the corresponding invariant path will be conjugate to an irrational rotation. Thus
$\Phi_X$ does not have finite orbits.

Let us now briefly discuss the slightly more general case where $X =x^a y^b Y$ with $a,b \in \N^{\ast}$.
Setting $d = am-bn$, we can suppose without loss of generality that $d\geq 1$. Next, by considering the system
$$
\begin{cases}
\frac{dx}{dt} = mx^{a+1} y^b \\
\frac{dy}{dt} = -nx^a y^{b+1} \, ,
\end{cases}
$$
we conclude that $dy/dx = -ny/mx$ so that $y = c x^{-n/m}$ in ramified coordinates. In turn, this
yields $dx/dt = c^b mx^{1 + d/m} \partial /\partial x$. Since $d \geq 1$ by construction, the orbits of the latter
vector field defines the well-known ``petals'' associated to Leau flower in the case of periodic linear
part, cf. \cite{carlerson}. For example, setting $m=1$ to simplify,
the orbits of the vector field $x^{1+d} \partial /\partial x$ consists of $d+1$ ``petals'' in non-ramified coordinates.

In any event, the sequence of points consisting of the first coordinates of the
full orbits of $\phi_X$ either marches straight off a neighborhood of $0 \in \{ y=0\} \simeq \C$ or it converges to $(0,0)$
(by construction this sequence is contained in $\{ y=0\}$).
Converging to $(0,0)$ will force the second coordinates of the points in the $\phi_X$-orbit to increase uniformly so that
the orbit in question must leave a fixed neighborhood of $(0,0) \in \C^2$. Summarizing, we conclude:

\noindent {\it Claim 1}. Fixed a neighborhood $U$ of $(0,0) \in \C^2$ and given $p = (x_0, y_0)$, $x_0y_0 \neq 0$, the set
$$
U \cap \left\{ \bigcup_{n=-\infty}^{\infty} \phi_X^n (p) \right\}
$$
is finite.

\vspace{0.1cm}

\noindent {\bf Example 2}: Diffeomorphisms leaving the function $(x,y) \mapsto xy$ invariant.

Here we are going to see two examples of diffeomorphisms having a nature somehow similar to those discussed in Example~1
but having also the advantage that they can easily be realized as the holonomy maps
of foliations with Siegel singular points. To begin with, let $F \in \diffCtwo$ be given by
\begin{equation}
F (x,y) = [ x(1+xy f(xy)), \, y (1 +xy f(xy))^{-1}] \, , \label{forholonomy1}
\end{equation}
where $f (z)$ is a holomorphic function defined about $0 \in \C$ and satisfying $f(0) \neq 0$.
Note that $F$ leaves the function $(x,y) \mapsto xy$ invariant since the product of its first and second components
equals~$xy$.

Next, consider an initial point $(x_0,y_0)$ with $x_0y_0 =C \neq 0$. The orbit of $(x_0,y_0)$
under $F$ is hence contained in the curve defined by $\{ xy = C\}$.
Moreover, for a point $(\tilde{x} , \tilde{y})$ lying in $\{ xy = C\}$, the value of
$F (\tilde{x} , \tilde{y})$ takes on the form
$$
F (\tilde{x} , \tilde{y}) = [\tilde{x} (1 + C  f(C)) \, , \,  \tilde{y} (1 + C  f(C))^{-1} ] \, .
$$
In particular, those values of $C$ for which $\vert 1 + C  f(C) \vert =1$ give rise to a rotation in the first coordinate.
Therefore the lifts of these circles in the corresponding leaves are loops. Besides for a generic choice of $C$
satisfying $\vert 1 + C  f(C) \vert =1$ the dynamics
induced on one of these invariant loops is conjugate to an irrational rotation so that $F$ does not have finite orbits.

Consider now the local diffeomorphism $H$ which is given by
\begin{equation}
H (x,y) = [ x(1+x^2y f(x^2y)), \, y (1 +x^2y f(x^2y))^{-1}] \, , \label{forholonomy2}
\end{equation}
where $f$ is as above. For this local diffeomorphism, we have:

\begin{lemma}
\label{diffeoforthmB}
The local diffeomorphism $H$ given by Formula~(\ref{forholonomy2}) possesses only finite orbits.
\end{lemma}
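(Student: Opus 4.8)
The plan is to argue leaf by leaf on the foliation $\{xy=\mathrm{const}\}$, which is invariant under $H$ because the product of the two components of $H$ equals $xy$. First I would note that $H$ fixes each coordinate axis pointwise, so the $\langle H\rangle$-orbit of any point of $\{xy=0\}$ is reduced to that point; hence only points $p=(x_0,y_0)$ with $C:=x_0y_0\neq 0$ need attention. Such a $p$ stays on the curve $\{xy=C\}$, which I parametrize by $x$. Restricted to this curve, $H$ becomes the germ $x\mapsto x\,(1+Cx\,f(Cx))$, and in the coordinate $u=Cx=x^2y$ this is precisely the one-variable germ $g(u)=u(1+uf(u))=u+f(0)u^2+\cdots$, which is parabolic of multiplicity $2$ (since $f(0)\neq 0$) and fixes $u=0$. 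So the orbit of $p$ under $H$ is encoded by the orbit of $u_0=x_0^2y_0$ under $g$.

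Next I would fix the candidate neighbourhood $V=\{|x|<r,\ |y|<r\}$ with $r>0$ small and translate membership in $V$ into a bound on $|u_n|$. If $H^n(p)=(x_n,y_n)$, then $x_ny_n=C$, $u_n:=Cx_n=g^n(u_0)$, $|x_n|=|u_n|/|C|$ and $|y_n|=|C|^2/|u_n|$; hence $H^n(p)\in V$ if and only if $|C|^2/r<|u_n|<r|C|$, i.e. $u_n$ lies in a round annulus $\mathcal A_C$ centred at $0$. The parabolic fixed point $u=0$ of $g$ sits strictly inside the inner circle of $\mathcal A_C$, and, since $|C|<r^2$ for $p\in V$, the outer radius $r|C|$ of $\mathcal A_C$ is strictly smaller than $r^3$.

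The heart of the matter is then the classical dichotomy for parabolic germs: there is a disc $D_{\rho_1}$ depending only on $g$ (hence independent of $C$) such that, for every $u_0\in D_{\rho_1}$, the forward $g$-orbit either converges to $0$ or leaves $D_{\rho_1}$ after finitely many steps; the same applies to the backward orbit, i.e. to the forward orbit of the parabolic germ $g^{-1}$. This follows from the Leau--Fatou flower theorem (cf.\ \cite{carlerson}), or directly from the expansion $g(u)=u+f(0)u^2+\cdots$ via the coordinate $w=-1/(f(0)u)$, in which $g$ reads $w\mapsto w+1+O(1/w)$. Choosing $r$ so small that $r^3<\rho_1$ --- harmless, $\rho_1$ being fixed --- both alternatives push $u_n$ out of $\mathcal A_C$ in finite time: if $u_n\to 0$ then eventually $|u_n|<|C|^2/r$, and if the orbit leaves $D_{\rho_1}$ then $|u_n|\geq\rho_1>r^3>r|C|$. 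Applying this forwards and backwards shows that both the forward and the backward $H$-orbit of $p$ leave $V$ after finitely many iterations, so $\calO_V^{\langle H\rangle}(p)$ is finite; combined with the trivial case $C=0$, this gives the lemma.

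The only genuinely delicate point is this last piece of bookkeeping, namely making a single radius $r$ work for all leaves simultaneously. It works because $g$ is one fixed germ, so $\rho_1$ is uniform in $C$, and because for $p\in V$ the starting point $u_0=x_0^2y_0$ is automatically much smaller than $\rho_1$, so the flower-theorem description applies from the outset; in particular one never has to worry about the orbit re-entering $V$ after its first exit, since by the very definition of the pseudogroup those later iterates no longer belong to $\calO_V^{\langle H\rangle}(p)$.
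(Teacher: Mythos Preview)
Your proof is correct and follows essentially the same strategy as the paper: restrict $H$ to each leaf $\{xy=C\}$, observe that the first-coordinate dynamics is a parabolic (Leau--flower) germ, and use the resulting dichotomy (orbit either converges to $0$ or escapes) together with the first integral $xy$ to force only finitely many visits to any small bidisc. Your substitution $u=Cx=x^2y$, which makes the restricted germ $g(u)=u(1+uf(u))$ independent of $C$, is a clean way to secure the uniformity across leaves that the paper's proof (which simply invokes the analogous argument from Example~1) leaves implicit.
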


\begin{proof}
Again the product of its first and second components of $H$ equals~$xy$ so that $H$ preserves the function
$(x,y) \mapsto xy$. To check that $H$ has finite orbits, we proceed as follows.
Fix an initial point
$(x_0,y_0)$ with $x_0y_0 =C \neq 0$ so that the orbit of $(x_0,y_0)$
under $H$ is contained in the curve $\{ xy = C\}$. Next note that, if $(\tilde{x} , \tilde{y})$ lies in $\{ xy = C\}$, we have
$$
H (\tilde{x} , \tilde{y}) = [\tilde{x}(1 + \tilde{x} C f( \tilde{x} C) \, , \,  \tilde{y} (1 + \tilde{x}C f(\tilde{x} C))^{-1} ] \, .
$$
The dynamics of the first component of $H$ behaves now as the Leau flower since it is given by $x \mapsto x + x^2 C f(xC)$,
where $C \neq 0$. Therefore, by resorting to an argument totally
analogous to the one employed in Example~1 for $X =x^a y^b Y$ with $d = am-bn \neq 0$, we conclude that all the orbits
of $H$ are finite as desired.
\end{proof}


\subsection{Singular foliations and holonomy}

In this paragraph Theorem~B will be proved and a couple of related examples will also be provided.

Let us begin by pointing out a simple observation showing that every element of $\diffCtwo$ can be realized as a local
holonomy map for some foliation on $(\C^3,0)$. Indeed, consider
a singular foliation $\fol$ on $(\C^3,0)$ admitting a separatrix $S$ through the origin and denote by
$h$ the holonomy map associated to $\fol$, with respect to $S$. Assume that the foliation is locally given by
the vector field $A(x,y,z) \partial /\partial x + B(x,y,z) \partial /\partial y + C(x,y,z) \partial /\partial
z$. Assume furthermore that the separatrix $S$ is given, in the same coordinates, by $\{x=0, y=0\}$. Setting $z = e^{2\pi it}$, the corresponding
holonomy map can be viewed as the time-one map associated to the differential equation
\[
\begin{cases}
\frac{dx}{dt} & = \frac{dx}{dz} \frac{dz}{dt} = 2\pi i e^{2\pi i t} \frac{A(x,y,e^{2 \pi i t})}{C(x,y,e^{2\pi it})}\\
\frac{dy}{dt} & = \frac{dy}{dz} \frac{dz}{dt} = 2\pi i e^{2\pi i t} \frac{B(x,y,e^{2 \pi i t})}{C(x,y,e^{2\pi it})}
\end{cases} \, .
\]
In the particular case where $A, \, B$ do not depend on $z$ and $C$ is reduced to $C(x,y,z) = z$, the holonomy
map of $\fol$ with respect to $S$ reduces to the time-one map induced by a vector field on $(\C^2, 0)$, namely by
the vector field
\[
2\pi i \left[ A(x,y) \frac{\partial}{\partial x} + B(x,y) \frac{\partial}{\partial y} \right] \, .
\]

Considering a local diffeomorphism $h$ possessing finite orbits and realizable as time-one map of a vector field $Y$,
then to find a vector field on $(\C^3,0)$ whose
foliation has $h$ as holonomy map, it suffices to take $Y$ and ``join" the term $2\pi i z \partial
/\partial z$. Then the holonomy of the foliation associated to the vector field
\[
X = Y + 2\pi i z \frac{\partial }{\partial z} \, ,
\]
with respect to the $z$-axis is nothing but $h$ itself.

Note that the vector field $X$ above corresponds to a saddle-node vector field of codimension~$2$. This is
equivalent to saying that its linear part admits exactly two eigenvalues equal to zero and a non-zero
eigenvalue associated to the direction of the separatrix $\{ x=y=0\}$. The fact that the holonomy of $\{ x=y=0\}$
has finite orbits is a phenomenon without analogue for saddle-nodes in dimension~$2$.

Let us now consider three special examples of foliations on $(\C^3,0)$ possessing only eigenvalues different from
{\it zero}, as in the case of Theorem~B.

\noindent {\bf Example 3}: Let $\fol$ denote the foliation associated to the vector field
\[
X = x(1 + xyz^2) \frac{\partial }{\partial x} + y(1 - xyz^2) \frac{\partial }{\partial y} - z \frac{\partial }{\partial z} \, .
\]
The $z$-axis corresponds to one of the separatrices of $\fol$. Taking $z = e^{2\pi it}$, it follows that the holonomy map $h$
associated to $\fol$, with respect to the $z$-axis, is given by the time-one map associated to the vector field
\begin{equation}\label{eqholonomy}
\begin{cases}
\frac{dx}{dt} & = \frac{dx}{dz} \frac{dz}{dt} = -2\pi i x(1 + e^{4\pi it}xy)\\
\frac{dy}{dt} & = \frac{dy}{dz} \frac{dz}{dt} = -2\pi i y(1 - e^{4\pi it}xy)
\end{cases} \, .
\end{equation}
To solve this system of differential equations, we should consider the series expansion of $(x(t),y(t))$ in terms of the
initial condition. More precisely, if $(x(0), y(0)) = (x_0, y_0)$, then we should let $x(t) = \sum a_{ij}(t) x_0^i y_0^j$
and $y(t) = \sum b_{ij}(t) x_0^i y_0^j$. Clearly $a_{10}(0) = b_{01}(0) = 1$ and $a_{ij} = b_{ij}(0) = 0$ in the other
cases. Substituting the series expansion of $x(t)$ and $y(t)$ on~(\ref{eqholonomy}) and comparing the same powers on the
initial conditions, it can be said that the system~(\ref{eqholonomy}) induces an infinite number of differential equations
involving the functions $a_{ij}, \, b_{ij}$ and their derivatives. Each one of the differential equation takes on the form
\begin{eqnarray*}
a_{ij}^{\prime}(t) & = & -2\pi i  \left[ a_{ij}(t) + \sum e^{4\pi it} a_{p_1q_1}(t) a_{p_2q_2} (t) b_{p_3q_3}(t) \right] \\
b_{ij}^{\prime}(t) & = & -2\pi i  \left[ b_{ij}(t) + \sum e^{4\pi it} a_{p_1q_1}(t) b_{p_2q_2} (t) b_{p_3q_3}(t) \right]
\end{eqnarray*}
where $p_1 + p_2 +p_3 =i$ and $q_1 + q_2 + q_3 = j$. In particular, the terms on the sum in the right hand side of the equation
above involves only coefficients of the monomials $x_0^p y_0^q$ of degree less then $i + j$ and such that $p \leq i$
and $q \leq j$. Computing this holonomy map becomes much easier with the following lemma:
\begin{lemma}
\label{preservingxy}
The holonomy map $h$ preserves the function $(x,y) \mapsto xy$.
\end{lemma}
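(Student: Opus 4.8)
The plan is to read off the invariance directly from the explicit holonomy system~(\ref{eqholonomy}). Write $(x(t),y(t))$ for the solution of~(\ref{eqholonomy}) with initial condition $(x(0),y(0))=(x_0,y_0)$, so that by definition $h(x_0,y_0)=(x(1),y(1))$, and introduce the auxiliary function $u(t)=x(t)y(t)$. First I would differentiate $u$ and substitute the two equations of~(\ref{eqholonomy}):
\[
\frac{du}{dt} \;=\; \frac{dx}{dt}\,y + x\,\frac{dy}{dt}
\;=\; -2\pi i\, u\,(1+e^{4\pi i t}u) \;-\; 2\pi i\, u\,(1-e^{4\pi i t}u)
\;=\; -4\pi i\, u \, ,
\]
the decisive point being that the two copies of the nonlinear term $e^{4\pi i t}u^2$ occur with opposite signs and cancel, leaving a linear equation for $u$.

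Integrating gives $u(t)=u(0)\,e^{-4\pi i t}$, hence $x(1)y(1)=u(1)=u(0)\,e^{-4\pi i}=u(0)=x_0y_0$, which is precisely the assertion that $h$ preserves the function $(x,y)\mapsto xy$. This already finishes the argument; as a conceptual variant (which I would record in passing) one can instead note that the monomial $xyz^{2}$ is itself a first integral of the three-dimensional vector field $X$ — indeed $X(xyz^{2})=xyz^{2}\big[(1+xyz^{2})+(1-xyz^{2})-2\big]=0$ — so that along the lift of the loop $z=e^{2\pi i t}$, on which $z^{2}$ returns to its initial value $1$, the product $xy$ is forced to return to its initial value as well.

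I do not expect any genuine obstacle here: the computation is entirely routine. The only point that deserves a moment's care is the bookkeeping behind the identification of $h$ with the time-one map of~(\ref{eqholonomy}) together with the observation $e^{-4\pi i}=1$, i.e. that traversing the loop once corresponds to letting $t$ run from $0$ to $1$ and brings $z^{2}=e^{4\pi i t}$ back to $1$; once this is made explicit, the invariance of $xy$ under $h$ is immediate.
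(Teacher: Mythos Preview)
Your argument is correct and matches the paper's own proof essentially line for line: both differentiate the product $xy$ along the holonomy system, observe the cancellation of the nonlinear terms to obtain $\frac{d}{dt}(xy)=-4\pi i\,xy$, integrate, and use $e^{-4\pi i}=1$. Your additional remark that $xyz^{2}$ is a first integral of $X$ is a pleasant conceptual gloss not present in the paper, but the core computation is identical.
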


\begin{proof}
To check that the level sets of $(x,y) \mapsto xy$ are preserved by $h$, consider the derivative of the product $x(t) y(t)$ with respect
to $t$. This gives us
$$
\frac{d}{dt} (xy) = \frac{dx}{dt}  y + x  \frac{dy}{dt}
= - \left[ 2\pi i x(1 + e^{4\pi it}xy) \right] y - x \left[ 2\pi i y(1 - e^{4\pi it}xy) \right]
= -4\pi i xy \, .
$$
Thus, by integrating the previous differential equation with respect to the product $xy$, we obtain
$(xy)(t) = x_0y_0 e^{-4 \pi i t}$.
Since the holonomy map corresponds to the time-one map of the system of differential equations~(\ref{eqholonomy}) and
since $e^{-4 \pi i t} = 1$ for all $t \in \Z$, it follows that the orbits of $h$ are contained in the level sets of $(x,y) \mapsto xy$ as desired.
\end{proof}

Lemma~\ref{preservingxy} implies that it suffices to determine the first coordinate of $h$.
By recovering the preceding non-autonomous system of differential equations, a simple induction argument
on the value of $i+j$ shows that $h$ has the form
\begin{equation}\label{eqexpressionH}
h(x,y) = (x(1 + xy f(xy)), y(1 + xy f(xy))^{-1}) \, ,
\end{equation}
where $f$ represents a holomorphic function of one complex variable such that $f(0) = 2\pi i$
(the expression for the second coordinate of $h$ is obtained from the first coordinate by means of
Lemma~\ref{preservingxy}). The resulting diffeomorphism $h$ is clearly non-periodic
but it does have invariant sets given by ``circles''.
Besides on some of these invariant ``circles'' the dynamics is conjugate to an irrational
rotation, cf. Example~2.

\vspace{0.1cm}

We are now ready to prove Theorem~B

\begin{proof}[Proof of Theorem B]
Let then $\fol$ denote the foliation associated to the vector field
\[
X = x(1 + x^2yz^3) \frac{\partial }{\partial x} + y(1 - x^2yz^3) \frac{\partial }{\partial y} - z \frac{\partial }{\partial z} \, .
\]
Again the $z$-axis corresponds to one of the separatrices of $\fol$.
Taking $z = e^{2\pi it}$, it follows that the holonomy map $h$
associated to $\fol$, with respect to the $z$-axis, is given by the time-one map associated to the vector field
\begin{equation}\label{eqholonomy2}
\begin{cases}
\frac{dx}{dt} & = \frac{dx}{dz} \frac{dz}{dt} = -2\pi i x(1 + e^{6\pi it}x^2y)\\
\frac{dy}{dt} & = \frac{dy}{dz} \frac{dz}{dt} = -2\pi i y(1 - e^{6\pi it}x^2y)
\end{cases} \, .
\end{equation}
The same argument employed in Lemma~\ref{preservingxy} shows that the holonomy map $h$ in question preserves
the level sets of the function $(x,y) \mapsto xy$.
To solve the corresponding system of equations, we consider again the series expansion of $(x(t) ,y(t))$
in terms of the initial condition. Let then $x(t) = \sum a_{ij} (t) x_0^i y_0^j$ and
$y(t) = \sum b_{ij} (t) x_0^i y_0^j$, where $a_{10} (0) = b_{01} (0) =1$ and $a_{ij} (0) =b_{ij} (0) =0$
in the remaining cases. It can immediately be checked that the functions
$a_{ij}, b_{ij}$ vanish identically for $2 \leq i+j \leq 3$. As to the monomials of degree~$4$, it can similar
be checked that they all vanish identically except $a_{31} (t)$ and $b_{22} (t)$. In fact, the latter functions satisfy
\begin{equation}
\begin{cases}
a_{31}^{\prime} (t) & =  -2\pi i [ a_{31} (t) +  e^{6\pi i} a_{10}^3 (t) b_{01} (t)] \\
b_{22}^{\prime} (t) & = -2\pi i [ b_{22} (t) - e^{6\pi i} a_{10}^2 (t) b_{01}^2 (t)]
\end{cases}
\end{equation}
so that $a_{31} (t) = -2\pi i t e^{-2\pi it}$ whereas $b_{22} (t) = 2\pi i t e^{-2\pi it}$.
In particular, $a_{31} (1) = -2\pi i$ and $b_{22} (1) = 2\pi i$. By using induction on $i+j$ (and keeping in mind
that $h$ preserves the function $(x,y) \mapsto xy$), it can be shown that $h$ takes on the form
\begin{equation}\label{eqexpressionH2}
h(x,y) = (x(1 + x^2y f(x^2y)), y(1 + x^2y f(x^2y))^{-1}) \; \; \, {\rm with} \; \; \, f(0) =2\pi i \, .
\end{equation}
It follows from Lemma~\ref{diffeoforthmB} that this diffeomorphism possesses finite orbits whereas it is clearly non-periodic.
This finishes the proof of Theorem~B.
\end{proof}

\vspace{0.1cm}


\bigskip

\bigskip

\begin{flushleft}
{\sc Julio Rebelo} \\
Institut de Math\'ematiques de Toulouse\\
118 Route de Narbonne\\
F-31062 Toulouse, FRANCE.\\
rebelo@math.univ-toulouse.fr

\end{flushleft}

\bigskip

\begin{flushleft}
{\sc Helena Reis} \\
Centro de Matem\'atica da Universidade do Porto, \\
Faculdade de Economia da Universidade do Porto, \\
Portugal\\
hreis@fep.up.pt \\

\end{flushleft}

\end{document}